\theoremstyle{plain}
\newtheorem{theorem}{Theorem} \numberwithin{theorem}{section}
\newtheorem{lemma}[theorem]{Lemma}
\theoremstyle{definition}
\newtheorem{remark}[theorem]{Remark}
\newtheorem{definition}[theorem]{Definition}
\newcommand{\F}{\textup{F}}
\newcommand\Pb{\mathbb{P}}
\newcommand\Rb{\mathbb{R}}
\newcommand{\Abf}[0]{\mathbf{A}}
\newcommand{\Bbf}[0]{\mathbf{B}}
\newcommand{\Dbf}[0]{\mathbf{D}}
\newcommand{\Hbf}[0]{\mathbf{H}}
\newcommand{\Mbf}[0]{\mathbf{M}}
\newcommand{\Rbf}[0]{\mathbf{R}}
\newcommand{\Sbf}[0]{\mathbf{S}}
\newcommand{\Ubf}[0]{\mathbf{U}}
\newcommand{\Vbf}[0]{\mathbf{V}}
\newcommand{\Xbf}[0]{\mathbf{X}}
\newcommand{\Ybf}[0]{\mathbf{Y}}
\newcommand{\abf}[0]{\mathbf{a}}
\newcommand{\ebf}[0]{\mathbf{e}}
\newcommand{\pbf}[0]{\mathbf{p}}
\newcommand{\qbf}[0]{\mathbf{q}}
\newcommand{\ubf}[0]{\mathbf{u}}
\newcommand{\vbf}[0]{\mathbf{v}}
\newcommand{\wbf}[0]{\mathbf{w}}
\newcommand{\xbf}[0]{\mathbf{x}}
\newcommand{\ybf}[0]{\mathbf{y}}
\newcommand{\zbf}[0]{\mathbf{z}}
\newcommand{\Omegabf}[0]{\bm{\Omega}}
\newcommand{\Phibf}[0]{\bm{\Phi}}
\newcommand{\Sigmabf}[0]{\bm{\Sigma}}
\newcommand{\Fc}[0]{\mathcal{F}}
\newcommand{\Xc}{\mathcal{X}}
\newcommand{\Yc}{\mathcal{Y}}
\newcommand{\Xe}{\boldsymbol{\mathscr{X}}}
\newcommand{\Ye}{\boldsymbol{\mathscr{Y}}}
\newcommand{\rank}[0]{\operatorname{rank}}
\newcommand{\range}[0]{\operatorname{range}}
\newcommand{\col}[0]{\operatorname{col}}
\DeclareMathOperator*{\argmin}{arg\,min}
\DeclareMathOperator*{\kron}{\bigotimes}
\DeclareMathOperator*{\kr}{\bigodot}
\newcommand{\JLT}{\operatorname{JLT}}
\newcommand{\defeq}{\stackrel{\text{\tiny \textnormal{def}}}{=}}  
\newcommand{\OPT}{\textup{OPT}}
\journal{Journal of \LaTeX\ Templates}
\begin{document}

\begin{frontmatter}

\title{Guarantees for the Kronecker Fast Johnson--Lindenstrauss Transform Using a Coherence and Sampling Argument}

\author{Osman~Asif~Malik\corref{cor1}}
\ead{osman.malik@colorado.edu}
\author{Stephen~Becker}

\address{Department of Applied Mathematics, University of Colorado Boulder, USA}


\cortext[cor1]{Corresponding author}


\begin{abstract}
	In the recent paper [Jin, Kolda \& Ward, arXiv:1909.04801], it is proved that the Kronecker fast Johnson--Lindenstrauss transform (KFJLT) is, in fact, a Johnson--Lindenstrauss transform, which had previously only been conjectured. In this paper, we provide an alternative proof of this, for when the KFJLT is applied to Kronecker vectors, using a coherence and sampling argument. Our proof yields a different bound on the embedding dimension, which can be combined with the bound in the paper by Jin et al.\ to get a better bound overall. As a stepping stone to proving our result, we also show that the KFJLT is a subspace embedding for matrices with columns that have Kronecker product structure. Lastly, we compare the KFJLT to four other sketch techniques in numerical experiments on both synthetic and real-world data.
\end{abstract}

\begin{keyword}
Johnson--Lindenstrauss lemma \sep subspace embedding \sep sketching \sep Kronecker product \sep tensor product
\MSC[2010] 15-02 \sep 65F30
\end{keyword}

\end{frontmatter}


\section{Introduction} \label{sec:intro}

The Johnson--Lindenstrauss lemma, which was introduced by \citet{johnson1984}, is the following fact. 
\begin{theorem}[Johnson--Lindenstrauss lemma \citep{dasgupta2003}] \label{thm:JL-lemma}
	Let $\varepsilon \in (0, 1)$ be a real number, let $\Xc \subseteq \Rb^{I}$ be a set of $N$ points, and suppose $J \geq C \varepsilon^{-2} \log N$, where $C$ is an absolute constant. Then there exists a map $f : \Rb^I \rightarrow \Rb^J$ such that for all $\xbf, \ybf \in \Xc$,
	\begin{equation}
		(1-\varepsilon) \| \xbf - \ybf \|_2^2 \leq \| f(\xbf) - f(\ybf) \|_2^2 \leq (1+\varepsilon) \| \xbf - \ybf \|_2^2.
	\end{equation}
\end{theorem}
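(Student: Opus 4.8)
The plan is to prove existence by the probabilistic method: I will exhibit a \emph{random} linear map and show that it satisfies all the required inequalities simultaneously with strictly positive probability, which forces at least one realization to work. Concretely, take $f(\xbf) = \frac{1}{\sqrt{J}} \Abf \xbf$, where $\Abf \in \Rb^{J \times I}$ has independent standard Gaussian entries. Since $f$ is linear, $f(\xbf) - f(\ybf) = f(\xbf - \ybf)$, so it suffices to control $\norm{f(\ubf)}_2^2$ for the $\binom{N}{2}$ difference vectors $\ubf = \xbf - \ybf$ with $\xbf, \ybf \in \Xc$.

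First I would reduce to a single fixed unit vector. For any fixed $\ubf$, rotational invariance of the Gaussian distribution shows that $\Abf \ubf / \norm{\ubf}_2$ has the same law as a vector of $J$ independent standard Gaussians, so $\norm{f(\ubf)}_2^2 / \norm{\ubf}_2^2$ is distributed as $\frac{1}{J} \chi^2_J$, a scaled chi-squared random variable with mean $1$. The problem thus becomes one of showing that such a variable concentrates tightly around its mean.

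The technical heart of the argument, and the step I expect to be the main obstacle, is the two-sided concentration bound: I would bound the moment generating function of $\chi^2_J$ and apply Markov's inequality to each tail (the Chernoff method), obtaining
\begin{equation}
\Pb\[ \abs{ \norm{f(\ubf)}_2^2 - \norm{\ubf}_2^2 } > \varepsilon \norm{\ubf}_2^2 \] \leq 2 \exp(-c J \varepsilon^2)
\end{equation}
for an absolute constant $c > 0$, where the assumption $\varepsilon \in (0,1)$ is what lets the quadratic term in $\varepsilon$ dominate the cubic one left over from expanding the exponent. Securing an exponent that scales like $J \varepsilon^2$ (rather than something weaker) is precisely what makes the final embedding dimension $J = \bigo(\varepsilon^{-2} \log N)$ achievable.

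Finally I would take a union bound over all $\binom{N}{2} < N^2 / 2$ pairs: the probability that \emph{some} pair violates its inequality is at most $N^2 \exp(-c J \varepsilon^2)$, and choosing the absolute constant $C$ in the hypothesis $J \geq C \varepsilon^{-2} \log N$ large enough drives this quantity strictly below $1$. Hence a good realization of $\Abf$ exists, and the corresponding $f$ satisfies all the stated inequalities simultaneously, which completes the proof.
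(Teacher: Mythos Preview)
Your argument is correct and is precisely the standard probabilistic proof due to Dasgupta and Gupta. Note, however, that the paper does not give its own proof of this statement: Theorem~\ref{thm:JL-lemma} is quoted as background with a citation to \citep{dasgupta2003}, so there is nothing in the paper to compare against beyond observing that your sketch matches the cited source.
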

Any mapping $f$ which has this property is called a Johnson--Lindenstrauss transform. Typically, such transforms are random maps, which motivates the following, more precise, definition.
\begin{definition}[Johnson--Lindenstrauss transform \citep{woodruff2014}] \label{def:JLT}
	A probability distribution on a family of maps 
	$\Fc$, where each $f \in \Fc$ maps $\Yc \subseteq \Rb^I$ to $\Rb^J$, is a \emph{Johnson--Lindenstrauss transform} with parameters $\varepsilon$, $\delta$, and $N$, or $\JLT(\varepsilon, \delta, N)$, on $\Yc$ if, for any subset $\Xc \subseteq \Yc$ containing $N$ elements, the probability of drawing a map $f \in \Fc$ which satisfies  
	\begin{equation}
		(\forall \xbf, \ybf \in \Xc) \;\;\;\; (1-\varepsilon) \| \xbf - \ybf \|_2^2 \leq \| f(\xbf) - f(\ybf) \|_2^2 \leq (1+\varepsilon) \| \xbf - \ybf \|_2^2
	\end{equation}
	is at least $1-\delta$.
	Following common usage, we will refer to a random map as a JLT when the corresponding distribution satisfies this definition.
\end{definition}
JLTs are usually constructed using simple random matrices, such as Gaussians with i.i.d.\ entries. They have many uses in applications, such as nearest neighbor searching \citep{ailon2009}, least squares regression \citep{avron2010, drineas2011}, sketching of data streams \citep{woodruff2014}, and clustering \citep{makarychev2019}.

When the vectors in the set $\Yc$ in Definition~\ref{def:JLT} have special structure, it is possible to construct a map $f$ that leverages this fact to speed up the computation of $f(\xbf)$ when $\xbf \in \Yc$. One class of vectors with such special structure are the Kronecker vectors $\xbf = \xbf^{(1)} \otimes \xbf^{(2)} \otimes \cdots \otimes \xbf^{(P)}$, where each $\xbf^{(p)} \in \Rb^{I_p}$ and $\otimes$ denotes the Kronecker product. Vectors with Kronecker structure appear in various applications. When matricizing tensors in CP or Tucker format, the resulting matrices have columns which are Kronecker products. Computation with Kronecker vectors therefore feature in algorithms for computing these decompositions \citep{kolda2009} and in related problems like tensor interpolative decomposition \citep{biagioni2015}. They also arise in areas such as higher dimensional numerical analysis \citep{beylkin2002, beylkin2006}, tensor regression \citep{diao2018}, and polynomial kernel approximation in machine learning \citep{pham2013}.
The Kronecker fast Johnson--Lindenstrauss transform (KFJLT) is a map that can be applied very efficiently to Kronecker structured vectors. It was first proposed by \citet{battaglino2018} for solving the least squares problems that arise when computing the CP decomposition of tensors. \citet{battaglino2018} conjectured that the KFJLT is a JLT, but did not provide a proof. 
Recently, \citet{jin2019} provided a proof that the KFJLT indeed is a JLT. 

In this paper, we provide an alternative proof of this fact for when the KFJLT is applied to Kronecker vectors, which is based on a coherence and sampling argument. 
As a stepping stone to proving our result, we also show that the KFJLT is an oblivious subspace embedding for matrices whose columns have Kronecker structure. 
Some ideas that we use in our proof were mentioned in \citep{battaglino2018}. 
Our guarantees are slightly different than those given in \citep{jin2019}: 
Ours have a worse dependence on the ambient dimensions $I_1, I_2, \ldots, I_P$ of the input vectors, but have a better dependence on the accuracy parameter $\varepsilon$. 
The two bounds can be combined into one which yields a better bound overall. 
Another distinction between \citep{jin2019} and our paper is that the result in \citep{jin2019} shows that the KFJLT is a JLT on vectors with arbitrary structure, whereas our result is restricted to vectors with Kronecker structure. 
This means that the guarantees in \citep{jin2019} will be applicable in situations when ours are not. 
For example, KFJLT could be used instead of a standard fast JLT for sketching arbitrary vectors in order to reduce the number of random bits required to construct the sketch. 
However, in certain applications involving arbitrary vectors our guarantees on Kronecker vectors are sufficient. 
For example, when applying a KFJLT sketch to the least squares problem $\min_\xbf \|\Abf \xbf - \ybf\|_2$, where $\Abf$ is a Khatri--Rao product and $\ybf$ is arbitrary, it turns out that our subspace embedding result combined with sampled approximate matrix multiplication ideas from \citep{drineas2006b} is sufficient for deriving guarantees; see Remark~\ref{rem:ls-guarantees} for further details.

\section{Other Related Work} \label{sec:related-work}

As mentioned in the introduction, a JLT can be constructed in many different ways. A popular choice is $f(\xbf) \defeq \Omegabf \xbf /\sqrt{J}$, where $\Omegabf \in \Rb^{J \times I}$ has i.i.d.\ standard normal entries. More generally, the rows of $\Omegabf$ can be chosen to be independent, mean zero, isotropic and sub-Gaussian random vectors in $\Rb^I$ \citep{vershynin2018}. \citet{ailon2009} proposed a fast JLT which leverages the Hadamard transform to achieve a transform that can be applied faster than a general dense matrix $\Omegabf$. 

A concept related to the JLT is subspace embedding. 
\begin{definition}[Subspace embedding \citep{woodruff2014}] \label{def:subspace-embedding}
	A $(1 \pm \varepsilon)$ $\ell_2$-\emph{subspace embedding} for the column space of a matrix $\Xbf \in \Rb^{I \times R}$ is a matrix $\Mbf \in \Rb^{J \times I}$ such that
	\begin{equation} \label{eq:subspace-embedding}
		(\forall \zbf \in \Rb^{R}) \;\;\;\; (1-\varepsilon) \|\Xbf \zbf\|_2^2 \leq \|\Mbf \Xbf \zbf\|_2^2 \leq (1+\varepsilon) \|\Xbf \zbf\|_2^2.
	\end{equation}
	We call a probability distribution on a family $\Fc$ of $J \times I$ matrices an \emph{$(\epsilon,\delta)$ oblivious $\ell_2$-subspace embedding} for $I \times R$ matrices with columns in $\Yc \subset \Rb^{I}$ if, for any matrix 
	\begin{equation}
		\Xbf = [\xbf_1, \; \xbf_2, \ldots, \; \xbf_R] \;\;\;\; \text{with} \;\;\;\; \xbf_1, \xbf_2, \ldots, \xbf_R \in \Yc,
	\end{equation}
	the probability of drawing a matrix $\Mbf \in \Fc$ satisfying \eqref{eq:subspace-embedding} is at least $1-\delta$.\footnote{In the definition of oblivious subspace embedding in Definition~2.2 of \citep{woodruff2014}, the matrix $\Xbf$ can have any structure, which corresponds to $\Yc = \Rb^{I}$. We find it convenient for our purposes to consider random mappings that are oblivious subspace embeddings for matrices with certain structure. 
	}
	Following common usage, we will refer to a random matrix as an oblivious subspace embedding when the corresponding distribution satisfies this definition. Unless specified otherwise, it is assumed that $\Yc = \Rb^I$.
\end{definition}
Thus, a subspace embedding distorts the squared length of a vector in the range of $\Xbf$ by only a small amount. Methods for subspace embedding include leverage score sampling \citep{magdon-ismail2010} and CountSketch \citep{clarkson2017}.
Leverage score sampling is not an oblivious subspace embedding, since the sampling probabilities depend on $\Xbf$. CountSketch, on the other hand, is an oblivious subspace embedding.
Note that a subspace embedding is not necessarily a JLT. CountSketch, for example, is not a JLT \citep{woodruff2014}. For a more complete survey of work related to the JLT and subspace embedding, we refer the reader to the surveys in \citep{mahoney2011, woodruff2014}.

For vectors with Kronecker structure, \citet{sun2018} propose the so called tensor random projection (TRP), whose transpose is a Khatri--Rao product of arbitrary random projection maps. They prove that TRP is a JLT in the special case when the TRP is constructed from two smaller random projections which have entries that are i.i.d.\ sub-Gaussians with zero mean and unit variance. The TRP idea is used in the earlier work \citep{biagioni2015} for tensor interpolative decomposition, but no guarantees are provided there. \citet{rakhshan2020} extend the TRP to allow for a wider range of structured sketches which incorporate CP tensor and tensor-train structure. They assume that the factor matrices and factor tensors for the CP tensor and tensor-train structures, respectively, follow a Gaussian distribution, and prove that their proposed sketches are JLTs. Notably, their results hold for arbitrary orders of the underlying CP tensors and tensor-trains.

\citet{cheng2016} propose an estimated leverage score sampling algorithm for $\ell_2$-regression when the design matrix is a Khatri--Rao product. They use this to speed up the alternating least squares algorithm for computing the tensor CP decomposition. A similar idea is proposed by \citet{diao2019} for $\ell_2$-regression when the design matrix is a Kronecker product.

The papers \citep{pagh2013, pham2013, avron2014, diao2018} develop a method called TensorSketch, which is a variant of CountSketch that can be applied particularly efficiently to matrices whose columns have Kronecker structure. \citet{avron2014} show that TensorSketch is an oblivious subspace embedding, and \citet{diao2018} provide guarantees for $\ell_2$-regression based on TensorSketch. However, just like CountSketch, TensorSketch is not a JLT.

A paper by \citet{iwen2019}, which appeared during the preparation of this paper, considers structured linear embedding operators for tensors. These operators first apply a sketch matrix to each mode of the tensor, then vectorize the result and apply another random sketch. Under certain assumptions on the tensor coherence and sketch matrix properties, they show that their proposed embedding operator is a form of tensor subspace embedding. Combining their approach with results from \citet{jin2019}, they also consider a variant of the KFJLT with improved embedding properties. We make some comparisons between our results and those in \citep{jin2019} and \citep{iwen2019} in Section~\ref{sec:main-results}.

\section{Preliminaries} \label{sec:preliminaries}

We use bold uppercase letters, e.g.\ $\Abf$, to denote matrices; bold lowercase letters, e.g.\ $\abf$, to denote vectors; and regular lowercase letters, e.g.\ $a$, to denote scalars. Regular uppercase letters, e.g.\ $I, J, K$, are usually used to denote the size of vectors and matrices. This means that $I$ is a number and not the identity matrix. Subscripts are used to denote elements of matrices, and a colon denotes all elements in a row or column. For example, for a matrix $\Abf$, $\Abf_{ij}$ is the element on position $(i,j)$, $\Abf_{i:}$ is the $i$th row, and $\Abf_{:j}$ is the $j$th column. Subscripts may be used to label different vectors. Superscripts in parentheses will be used for labeling both matrices and vectors. For example, $\Abf^{(1)}$ and $\Abf^{(2)}$ are two matrices. The norm $\|\cdot\|_2$ denotes the standard Euclidean norm for vectors, and the spectral norm for matrices. For matrices $\Abf \in \Rb^{I \times J}$ and $\Bbf \in \Rb^{K \times L}$, their Kronecker product is denoted by $\Abf \otimes \Bbf \in \Rb^{IK \times JL}$ and is defined as
\begin{equation}
	\Abf \otimes \Bbf \defeq
	\begin{bmatrix}	
		\Abf_{11} \Bbf & \Abf_{12} \Bbf & \cdots & \Abf_{1J} \Bbf\\
		\Abf_{21} \Bbf & \Abf_{22} \Bbf & \cdots & \Abf_{2J} \Bbf\\
		\vdots & \vdots &  & \vdots \\
		\Abf_{I1} \Bbf & \Abf_{I2} \Bbf & \cdots & \Abf_{IJ} \Bbf\\
	\end{bmatrix}.
\end{equation}
For matrices $\Abf \in \Rb^{I \times K}$ and $\Bbf \in \Rb^{J \times K}$, their Khatri--Rao product is denoted by $\Abf \odot \Bbf \in \Rb^{IJ \times K}$ and is defined as 
\begin{equation}
	\Abf \odot \Bbf \defeq
	\begin{bmatrix}
		\Abf_{:1} \otimes \Bbf_{:1} & \Abf_{:2} \otimes \Bbf_{:2} & \cdots & \Abf_{:K} \otimes \Bbf_{:K}
	\end{bmatrix}.
\end{equation}
For a positive integer $n$, we use the notation $[n] \defeq \{1, 2, \ldots, n\}$. We let $\sigma_i(\Abf)$ denote the $i$th singular value of the matrix $\Abf$.

We now introduce the different tools we use to prove our results. 
\begin{definition}[Randomized Hadamard transform \citep{ailon2009}]
	Let $\Hbf \in \Rb^{I \times I}$ be the normalized Hadamard transform, and let $\Dbf \in \Rb^{I \times I}$ be a diagonal matrix with i.i.d.\ Rademacher random variables (i.e., equal to $+1$ or $-1$ with equal probability) on the diagonal. The $I \times I$ \emph{randomized Hadamard transform} is defined as the random map $\xbf \mapsto \Hbf \Dbf \xbf$.
\end{definition}

\begin{definition}[Leverage score, coherence \citep{woodruff2014}] \label{def:leverage-score-coherence}
	Let $\Abf \in \Rb^{I \times R}$ be a matrix, and let $\col(\Abf)$ be a matrix of size $I \times \rank(\Abf)$ whose columns form an orthonormal basis for $\range(\Abf)$. Then 
	\begin{equation}
	\ell_{i}(\Abf) \defeq \| \col(\Abf)_{i:} \|_2^2, \;\;\;\; i \in [I],
	\end{equation}
	is the $i$th \emph{leverage score} of $\Abf$. The \emph{coherence} of $\Abf$ is defined as
	\begin{equation}
	\mu(\Abf) \defeq \max_{i \in [I]} \ell_{i}(\Abf).
	\end{equation}
	The leverage scores, and consequently the coherence, do not depend on the particular basis chosen for the range of $\Abf$ \citep{woodruff2014}, so these quantities are well-defined. The coherence satisfies $\rank(\Abf)/I \leq \mu(\Abf) \leq 1$.
\end{definition}

\begin{definition}[Leverage score sampling \citep{woodruff2014}] \label{def:lev-score-sampling}
	Let $\Abf \in \Rb^{I \times R}$ and $p_i \defeq \ell_i(\Abf)/\rank(\Abf)$ for all $i \in [I]$. Then $\pbf \defeq [p_1, \, p_2, \ldots, \, p_I]$ is a probability distribution on $[I]$. Let $\qbf \defeq [q_1, \, q_2, \ldots, \, q_I]$ be another probability distribution on $[I]$, and suppose that for some $\beta \in (0,1]$ it satisfies $q_i \geq \beta p_i$ for all $i \in [I]$. Let $\vbf \in [I]^{J}$ be a random vector with independent elements satisfying $\Pb(\vbf_{j} = i)= q_i$ for all $(i,j) \in [I]\times[J]$. Let $\bm{\Omega} \in \Rb^{J \times I}$ and $\Rbf \in \Rb^{J \times J}$ be a random sampling matrix and a diagonal rescaling matrix, respectively, defined as
	\begin{equation}
		\bm{\Omega}_{j:} \defeq \ebf_{\vbf_j}^\top \;\;\;\; \text{and} \;\;\;\; \Rbf_{jj} \defeq \frac{1}{\sqrt{J q_{\vbf_j}}} 
	\end{equation}
	for each $j \in [J]$, where $\ebf_{i}$ is the $i$th column of the $I \times I$ identity matrix. The \emph{leverage score sampling} matrix $\Sbf_\qbf \in \Rb^{J \times I}$ is then defined as $\Sbf_\qbf \defeq \Rbf \Omegabf$, where the subscript indicates that the sampling is done according to the distribution $\qbf$.
\end{definition}

\begin{definition}[Kronecker fast Johnson--Lindenstrauss transform \citep{jin2019}] \label{def:KFJLT}
	For each $p \in [P]$, let $\Hbf^{(p)} \Dbf^{(p)}$ be independent randomized Hadamard transforms\footnote{\citet{jin2019} use the discrete Fourier transform instead of the Hadamard transform in their definition.} of size $I_p \times I_p$.
	The \emph{Kronecker fast Johnson--Lindenstrauss transform} (KFJLT) of a vector $\xbf = \xbf^{(1)} \otimes \xbf^{(2)} \otimes \cdots \otimes \xbf^{(P)}$, with $\xbf^{(p)} \in \Rb^{I_p}$, is defined as
	\begin{equation} \label{eq:KFJLT-def}
		\Sbf_\qbf \Big( \kron_{p=1}^P \Hbf^{(p)} \Dbf^{(p)} \Big) \xbf = \Sbf_\qbf \Big( \kron_{p=1}^{P} \Hbf^{(p)} \Dbf^{(p)} \xbf^{(p)} \Big),
	\end{equation}
	where $\Sbf_\qbf \in \Rb^{J \times \tilde{I}}$ is a sampling matrix as in Definition~\ref{def:lev-score-sampling} with $\qbf$ equal to the uniform distribution on $[\tilde{I}]$, where $\tilde{I} \defeq I_1 I_2 \cdots I_P$. The equality in \eqref{eq:KFJLT-def} follows from a basic property of the Kronecker product; see e.g.\ Lemma~4.2.10 in \citep{horn1994}.
\end{definition}
A benefit of the KFJLT is that the Kronecker structured vector does not have to be explicitly computed---it is sufficient to store the smaller vectors $\xbf^{(1)}, \xbf^{(2)}, \ldots, \xbf^{(P)}$. Another benefit is that each randomized Hadamard transform $\Hbf^{(p)} \Dbf^{(p)} \in \Rb^{I_p \times I_p}$ only costs $O(I_p \log I_p)$ to apply to $\xbf^{(p)}$. 

Lemma~\ref{lem:mixing} below is a variant of Lemma~3 in \cite{drineas2011} but with an arbitrary probability of success. The proof is identical to that for Lemma~3 in \cite{drineas2011}---which in turn follows similar reasoning as in the proof of Lemma~1 in \cite{ailon2009}---but using an arbitrary failure probability $\eta$ instead of $1/20$, combined with the definition of leverage score and coherence.
\begin{lemma} \label{lem:mixing}
	Let $\Abf \in \Rb^{I \times R}$ be a matrix and let $\Hbf \Dbf$ be the $I \times I$ randomized Hadamard transform. Then, with probability at least $1-\eta$, the following holds:
	\begin{equation}
	\mu( \Hbf \Dbf \Abf ) \leq \frac{2 R \ln(2IR/\eta)}{I}.
	\end{equation}
\end{lemma}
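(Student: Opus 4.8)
The plan is to bound the coherence $\mu(\Hbf\Dbf\Abf)$ directly as the largest leverage score of $\Hbf\Dbf\Abf$, using the basis-independence of leverage scores noted in Definition~\ref{def:leverage-score-coherence}. First I would set $\Ubf \defeq \col(\Abf) \in \Rb^{I \times r}$, where $r \defeq \rank(\Abf) \le R$, so that $\Ubf$ has orthonormal columns spanning $\range(\Abf)$. Since $\Hbf\Dbf$ is orthogonal, $\Hbf\Dbf\Ubf$ also has orthonormal columns and spans $\range(\Hbf\Dbf\Abf)$, so we may take $\col(\Hbf\Dbf\Abf) = \Hbf\Dbf\Ubf$. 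Consequently, for every $i \in [I]$,
$$\ell_i(\Hbf\Dbf\Abf) = \norm{(\Hbf\Dbf\Ubf)_{i:}}_2^2 = \sum_{k=1}^r (\Hbf\Dbf\Ubf)_{ik}^2,$$
which reduces the problem to controlling the individual entries $(\Hbf\Dbf\Ubf)_{ik}$.

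Note that all randomness sits in $\Dbf$, while $\Hbf$ is deterministic with $\abs{\Hbf_{ij}} = 1/\sqrt{I}$. For fixed $i$ and $k$, I would write $(\Hbf\Dbf\Ubf)_{ik} = \sum_{j=1}^I \Hbf_{ij}\Ubf_{jk}\, d_j$, where $d_j \defeq \Dbf_{jj}$ are i.i.d.\ Rademacher variables. This is a weighted sum of independent, mean-zero, bounded random variables, so Hoeffding's inequality applies. Using $\Hbf_{ij}^2 = 1/I$ and $\norm{\Ubf_{:k}}_2 = 1$, the sum of squared ranges equals $\sum_{j=1}^I (2\Hbf_{ij}\Ubf_{jk})^2 = 4/I$, which yields
$$\Pb\big( \abs{(\Hbf\Dbf\Ubf)_{ik}} \ge t \big) \le 2\exp\!\big( -I t^2 / 2 \big).$$
Choosing $t^2 = (2/I)\ln(2IR/\eta)$ makes the right-hand side at most $\eta/(IR)$.

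Finally I would take a union bound over all $Ir$ index pairs $(i,k)$; since $r \le R$, the total failure probability is at most $Ir \cdot \eta/(IR) \le \eta$. On the complementary event, every entry satisfies $(\Hbf\Dbf\Ubf)_{ik}^2 \le (2/I)\ln(2IR/\eta)$ simultaneously, so summing over the $r$ columns gives $\ell_i(\Hbf\Dbf\Abf) \le (2r/I)\ln(2IR/\eta) \le (2R/I)\ln(2IR/\eta)$ for every $i$, and taking the maximum over $i$ gives the claimed bound on $\mu(\Hbf\Dbf\Abf)$. There is no deep obstacle here; the only delicate point is the bookkeeping in the union bound, where putting $R$ (rather than $r$) inside the logarithm from the start makes the per-entry tail probability small enough that the count over the $Ir$ pairs still totals at most $\eta$ and reproduces the stated constant exactly.
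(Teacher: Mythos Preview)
Your proof is correct and follows exactly the approach the paper points to: the paper does not give a self-contained argument but cites Lemma~3 in \citep{drineas2011} (and Lemma~1 in \citep{ailon2009}), whose proof is precisely the Hoeffding-plus-union-bound argument you wrote out, with $\eta$ replacing the fixed constant $1/20$. Your bookkeeping with $r \le R$ inside the logarithm is also the right way to recover the stated constant.
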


Lemma~\ref{lem:Cheng} below is a restated version of Theorem~3.3 in \cite{cheng2016}. A similar statement is also made in Lemma~4 in \cite{battaglino2018}.
\begin{lemma} \label{lem:Cheng}
	For each $p \in [P]$, let $\Abf^{(p)} \in \Rb^{I_p \times R}$. Then
	\begin{equation}
	\mu \Big( \kr_{p=1}^P \Abf^{(p)} \Big) \leq \prod_{p=1}^P \mu(\Abf^{(p)}).
	\end{equation}
\end{lemma}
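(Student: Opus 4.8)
The plan is to reduce the claim to the analogous statement for the full Kronecker product $\kron_{p=1}^P \Abf^{(p)}$, whose leverage scores I can compute exactly, and then exploit the fact that the range of the Khatri--Rao product sits inside the range of the Kronecker product. Both products have their $\tilde{I} \defeq I_1 \cdots I_P$ rows indexed by the same multi-indices $(i_1, \ldots, i_P) \in [I_1] \times \cdots \times [I_P]$, so their leverage scores are directly comparable rowwise. The starting observation is that, for each $k \in [R]$, the $k$th column of $\kr_{p=1}^P \Abf^{(p)}$, namely $\kron_{p=1}^P \Abf^{(p)}_{:k}$, is precisely the column of $\kron_{p=1}^P \Abf^{(p)}$ indexed by the ``diagonal'' multi-index $(k, k, \ldots, k)$. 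Hence $\range(\kr_{p=1}^P \Abf^{(p)}) \subseteq \range(\kron_{p=1}^P \Abf^{(p)})$, and since leverage scores depend only on the range, I expect this inclusion to force a rowwise inequality.

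First I would make that inequality precise. Let $\Wbf \defeq \col(\kron_{p=1}^P \Abf^{(p)})$ and $\Qbf \defeq \col(\kr_{p=1}^P \Abf^{(p)})$ be orthonormal bases for the two ranges. The inclusion $\range(\Qbf) \subseteq \range(\Wbf)$ lets me write $\Qbf = \Wbf \Tbf$ with $\Tbf = \Wbf^\top \Qbf$, and a short computation gives $\Tbf^\top \Tbf = \Ibf$, so that $\norm{\Tbf}_2 = 1$. Then for any row index $i$,
\begin{equation*}
\ell_i\Big(\kr_{p=1}^P \Abf^{(p)}\Big) = \norm{\Qbf_{i:}}_2^2 = \norm{\Wbf_{i:} \Tbf}_2^2 \le \norm{\Wbf_{i:}}_2^2 \, \norm{\Tbf}_2^2 = \ell_i\Big(\kron_{p=1}^P \Abf^{(p)}\Big).
\end{equation*}

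Next I would evaluate the Kronecker leverage scores exactly. Taking a thin SVD $\Abf^{(p)} = \Ubf^{(p)} \Sigmabf^{(p)} (\Vbf^{(p)})^\top$ for each $p$ and applying the mixed-product property yields $\kron_{p=1}^P \Abf^{(p)} = \big(\kron_{p=1}^P \Ubf^{(p)}\big)\big(\kron_{p=1}^P \Sigmabf^{(p)}\big)\big(\kron_{p=1}^P \Vbf^{(p)}\big)^\top$, where $\kron_{p=1}^P \Ubf^{(p)}$ has orthonormal columns and so may serve as $\Wbf$. The row of $\kron_{p=1}^P \Ubf^{(p)}$ at the multi-index $(i_1, \ldots, i_P)$ equals $\kron_{p=1}^P \Ubf^{(p)}_{i_p:}$, whose squared norm factorizes as $\prod_{p=1}^P \norm{\Ubf^{(p)}_{i_p:}}_2^2 = \prod_{p=1}^P \ell_{i_p}(\Abf^{(p)})$. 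Chaining this with the previous display and maximizing over $(i_1, \ldots, i_P)$,
\begin{equation*}
\mu\Big(\kr_{p=1}^P \Abf^{(p)}\Big) \le \max_{i_1, \ldots, i_P} \prod_{p=1}^P \ell_{i_p}(\Abf^{(p)}) = \prod_{p=1}^P \max_{i_p \in [I_p]} \ell_{i_p}(\Abf^{(p)}) = \prod_{p=1}^P \mu(\Abf^{(p)}),
\end{equation*}
where the middle step uses that the objective separates over the independent indices and that each factor is nonnegative.

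The step I expect to need the most care is handling the possible rank deficiency of the factors $\Abf^{(p)}$: the argument must treat $\col(\cdot)$ as an orthonormal basis of the \emph{range} rather than of the columns, and must confirm that $\kron_{p=1}^P \Ubf^{(p)}$, assembled from the thin SVDs, both spans $\range(\kron_{p=1}^P \Abf^{(p)})$ and has orthonormal columns. Both facts follow from the identity $(\kron_{p=1}^P \Ubf^{(p)})^\top (\kron_{p=1}^P \Ubf^{(p)}) = \kron_{p=1}^P (\Ubf^{(p)})^\top \Ubf^{(p)} = \Ibf$ together with the mixed-product property; the remaining manipulations are routine bookkeeping with the rows and norms of Kronecker products of vectors.
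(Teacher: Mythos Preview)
Your argument is correct. The paper does not actually prove this lemma; it merely cites it as a restatement of Theorem~3.3 in \citep{cheng2016} (with a similar statement in Lemma~4 of \citep{battaglino2018}). So there is no in-paper proof to compare against, but your self-contained argument is sound: the inclusion $\range\big(\kr_{p=1}^P \Abf^{(p)}\big) \subseteq \range\big(\kron_{p=1}^P \Abf^{(p)}\big)$ gives the rowwise leverage score domination via the contraction $\Tbf$ with $\Tbf^\top \Tbf = \Ibf$, and the Kronecker leverage scores factor exactly through the thin SVDs of the $\Abf^{(p)}$. Your handling of possible rank deficiency is also fine, since each $\Sigmabf^{(p)}$ in the thin SVD has strictly positive diagonal, making $\kron_{p=1}^P \Sigmabf^{(p)}$ invertible and hence $\kron_{p=1}^P \Ubf^{(p)}$ a valid choice of $\col\big(\kron_{p=1}^P \Abf^{(p)}\big)$.
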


Lemma~\ref{lem:subsampling} below is a slight restatement of Theorem~2.11 in \citep{woodruff2014}, with a careful choice of the constant parameter\footnote{The statement in \citep{woodruff2014} has a constant 144 instead of $8/3$. However, we found that $8/3$ is sufficient under the assumption that $\varepsilon \in (0,1)$. The proof given in \citep{woodruff2014} otherwise remains the same.}. 
\begin{lemma} \label{lem:subsampling}
	Let $\Abf \in \Rb^{I \times R}$ and assume $\varepsilon \in (0,1)$. Suppose 
	\begin{equation} \label{eq:J-bound-subsampling}
		J > \frac{8}{3}\frac{R \ln(2 R/\eta)}{\beta \varepsilon^{2}}
	\end{equation}
	and that $\Sbf_\qbf \in \Rb^{J \times I}$ is a leverage score sampling matrix as in Definition~\ref{def:lev-score-sampling}, where the $\beta$ in that definition is the same as the $\beta$ in \eqref{eq:J-bound-subsampling}. Then, with probability at least $1-\eta$, the following holds:
	\begin{equation}
		(\forall i \in [\rank(\Abf)]) \;\;\;\; 1-\varepsilon \leq \sigma_i^2(\Sbf_\qbf \col(\Abf)) \leq 1+\varepsilon.
	\end{equation}
\end{lemma}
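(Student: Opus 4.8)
The plan is to reduce the singular-value statement to a single spectral-norm deviation bound for a sum of independent rank-one random matrices, and then feed this into a matrix Bernstein inequality. Write $\Ubf \defeq \col(\Abf) \in \Rb^{I \times r}$ with $r \defeq \rank(\Abf)$, so that $\Ubf^\top \Ubf = \Ibf_r$ and the $i$th leverage score is $\ell_i(\Abf) = \norm{\Ubf_{i:}}_2^2$. Since the quantities $\sigma_i^2(\Sbf_\qbf \Ubf)$ are exactly the eigenvalues of the $r \times r$ matrix $\Ubf^\top \Sbf_\qbf^\top \Sbf_\qbf \Ubf$, the conclusion $(\forall i)\; 1-\varepsilon \leq \sigma_i^2(\Sbf_\qbf\Ubf) \leq 1+\varepsilon$ is equivalent to the single estimate $\norm{\Ubf^\top \Sbf_\qbf^\top \Sbf_\qbf \Ubf - \Ibf_r}_2 \leq \varepsilon$.

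Next I would expand this matrix as a sum of i.i.d.\ terms, one per sampled index. By Definition~\ref{def:lev-score-sampling}, $\Sbf_\qbf = \Rbf \Omegabf$ samples indices $\vbf_1, \ldots, \vbf_J$ i.i.d.\ with $\Pb(\vbf_j = i) = q_i$ and rescales, so that
\[
\Ubf^\top \Sbf_\qbf^\top \Sbf_\qbf \Ubf = \sum_{j=1}^J \Xbf_j, \qquad \Xbf_j \defeq \frac{1}{J q_{\vbf_j}} (\Ubf_{\vbf_j:})^\top \Ubf_{\vbf_j:}.
\]
A direct computation using $\sum_i (\Ubf_{i:})^\top \Ubf_{i:} = \Ubf^\top \Ubf = \Ibf_r$ gives $\Eb \Xbf_j = \frac{1}{J}\Ibf_r$, hence $\Eb \sum_j \Xbf_j = \Ibf_r$, as required. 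The task is therefore to show that $\sum_j \Xbf_j$ concentrates around $\Ibf_r$ in spectral norm.

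The crux is to bound the two parameters that drive the matrix Bernstein inequality, using the oversampling condition $q_i \geq \beta p_i = \beta \ell_i(\Abf)/r$ from Definition~\ref{def:lev-score-sampling}. For the per-term norm, $\norm{\Xbf_j}_2 = \ell_{\vbf_j}(\Abf)/(J q_{\vbf_j}) \leq r/(\beta J)$ since $\ell_i/q_i \leq r/\beta$. For the variance, using $\Ubf_{i:}(\Ubf_{i:})^\top = \ell_i(\Abf)$ one gets $\Eb \Xbf_j^2 = \frac{1}{J^2}\sum_i \frac{\ell_i(\Abf)}{q_i} (\Ubf_{i:})^\top \Ubf_{i:} \preceq \frac{r}{\beta J^2}\Ibf_r$, so the matrix variance of the centered sum $\sum_j (\Xbf_j - \frac1J \Ibf_r)$ satisfies $v \leq r/(\beta J)$. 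Both parameters thus equal the common scale $L \defeq r/(\beta J)$, which is the key coincidence.

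Finally, applying matrix Bernstein to the centered sum with deviation $t = \varepsilon$ yields a failure probability at most $2r \exp\big(\frac{-\varepsilon^2/2}{L(1 + \varepsilon/3)}\big)$. Here the hypothesis $\varepsilon \in (0,1)$ enters decisively: it gives $1 + \varepsilon/3 < 4/3$, so the exponent is at most $-3\varepsilon^2/(8L) = -3\varepsilon^2 \beta J/(8r)$. Requiring $2r \exp(-3\varepsilon^2\beta J/(8r)) \leq \eta$ and solving for $J$ gives $J \geq \frac{8}{3}\frac{r \ln(2r/\eta)}{\beta \varepsilon^2}$; since $r \leq R$ and $\ln(2r/\eta) \leq \ln(2R/\eta)$, the stated hypothesis suffices. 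I expect the main obstacle to be the bookkeeping required to keep the constant tight: a matrix Chernoff argument would only yield a constant of $3$, and it is precisely the fact that the Bernstein norm and variance parameters share the value $r/(\beta J)$, combined with the $\varepsilon < 1$ simplification of the $1 + \varepsilon/3$ factor, that produces the improved constant $8/3$ in place of the cruder $144$.
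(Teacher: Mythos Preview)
Your proposal is correct and matches the paper's approach: the paper does not give its own proof but simply cites Theorem~2.11 of Woodruff's survey and notes in a footnote that, under the extra hypothesis $\varepsilon\in(0,1)$, the constant $144$ there can be sharpened to $8/3$ while the rest of the proof is unchanged. Your write-up is exactly that proof---the matrix Bernstein argument applied to the i.i.d.\ rank-one summands $\Xbf_j$---and your observation that $1+\varepsilon/3<4/3$ is precisely the mechanism the authors allude to for obtaining $8/3$.
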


\section{Main Results} \label{sec:main-results}

We consider the set
\begin{equation}
	\Yc \defeq \{\xbf \in \Rb^{\tilde{I}} \; : \; \xbf = \xbf^{(1)} \otimes \xbf^{(2)} \otimes \cdots \otimes \xbf^{(P)}, \; \text{with } \xbf^{(p)} \in \Rb^{I_p} \; \text{for each } p \in [P]\}
\end{equation}
of Kronecker vectors. Theorem~\ref{thm:subspace-embedding} shows that the KFJLT is an $(\varepsilon, \delta)$ oblivious $\ell_2$-subspace embedding for matrices whose columns have Kronecker product structure when the embedding dimension $J$ is sufficiently large.
\begin{theorem} \label{thm:subspace-embedding}
	Let $\Xbf = [\xbf_1, \; \xbf_2, \ldots, \; \xbf_R] \in \Rb^{\tilde{I} \times R}$ be a matrix with each column $\xbf_r = \kron_{p=1}^P \xbf_r^{(p)} \in \Yc$.
	For each $p \in [P]$, let $\Hbf^{(p)} \Dbf^{(p)}$ be independent randomized Hadamard transforms of size $I_p \times I_p$, and define 
	\begin{equation}
	\Phibf \defeq \kron_{p=1}^P \Hbf^{(p)} \Dbf^{(p)}.
	\end{equation}
	Let $\Sbf_\qbf \in \Rb^{J \times \tilde{I}}$ be a sampling matrix as in Definition~\ref{def:lev-score-sampling} with $\qbf$ equal to the uniform distribution, and assume $\varepsilon \in (0,1)$.
	If
	\begin{equation} \label{eq:J-bound-subspace-embedding}
		J > \frac{8}{3} \cdot 2^{P} R^{P+1} \varepsilon^{-2} \ln \Big( \frac{2 R (P+1)}{\delta} \Big) \prod_{p=1}^P \ln\Big(\frac{2 I_p R (P+1)}{\delta}\Big),
	\end{equation}
	then the following holds with probability at least $1-\delta$:
	\begin{equation}
		(\forall \zbf \in \Rb^R) \;\;\;\; (1-\varepsilon) \|\Xbf \zbf\|_2^2 \leq \|\Sbf_\qbf \Phibf \Xbf \zbf\|_2^2 \leq (1+\varepsilon) \|\Xbf \zbf\|_2^2.
	\end{equation}
\end{theorem}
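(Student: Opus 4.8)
The plan is to reduce the subspace-embedding guarantee to the singular-value statement of Lemma~\ref{lem:subsampling}, using the randomized Hadamard transforms only to tame the coherence of the column space. First I would observe that $\Phibf$ is orthogonal, being a Kronecker product of products of the orthogonal matrices $\Hbf^{(p)}$ and $\Dbf^{(p)}$; hence $\|\Phibf\Xbf\zbf\|_2 = \|\Xbf\zbf\|_2$ for every $\zbf$, and the claimed inequalities are equivalent to $(1-\varepsilon)\|\Phibf\Xbf\zbf\|_2^2 \le \|\Sbf_\qbf\Phibf\Xbf\zbf\|_2^2 \le (1+\varepsilon)\|\Phibf\Xbf\zbf\|_2^2$ for all $\zbf$. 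Since $\{\Phibf\Xbf\zbf : \zbf\in\Rb^R\} = \range(\col(\Phibf\Xbf))$ and $\col(\Phibf\Xbf)$ has orthonormal columns, this is in turn equivalent to $1-\varepsilon \le \sigma_i^2(\Sbf_\qbf\col(\Phibf\Xbf)) \le 1+\varepsilon$ for all $i\in[\rank(\Phibf\Xbf)]$, which is exactly the conclusion of Lemma~\ref{lem:subsampling} applied to $\Abf = \Phibf\Xbf$. It therefore remains to certify the two hypotheses of that lemma: that uniform sampling is a valid leverage-score sampling with a usable $\beta$, and that the prescribed $J$ exceeds the required threshold. (The degenerate case $\Xbf = \zerobf$ is immediate.)

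The structural key is that mixing preserves the Kronecker/Khatri--Rao form. Writing $\Xbf^{(p)} \defeq [\xbf_1^{(p)}, \ldots, \xbf_R^{(p)}] \in \Rb^{I_p\times R}$ and using the mixed-product property of the Kronecker product, the $r$th column of $\Phibf\Xbf$ equals $\kron_{p=1}^P(\Hbf^{(p)}\Dbf^{(p)}\xbf_r^{(p)})$, so that $\Phibf\Xbf = \kr_{p=1}^P(\Hbf^{(p)}\Dbf^{(p)}\Xbf^{(p)})$. I would then bound its coherence by Lemma~\ref{lem:Cheng},
\begin{equation}
	\mu(\Phibf\Xbf) \le \prod_{p=1}^P \mu\big(\Hbf^{(p)}\Dbf^{(p)}\Xbf^{(p)}\big),
\end{equation}
and control each factor with Lemma~\ref{lem:mixing}, applied to $\Xbf^{(p)}$ with failure probability $\eta_p \defeq \delta/(P+1)$. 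On the intersection $E$ of these $P$ events this yields $\mu(\Phibf\Xbf) \le \tilde{I}^{-1}\,2^P R^P\prod_{p=1}^P \ln(2I_pR(P+1)/\delta)$.

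Next I would verify that uniform sampling qualifies as leverage-score sampling in the sense of Definition~\ref{def:lev-score-sampling}. For the uniform distribution $q_i = 1/\tilde{I}$ and the exact leverage-score distribution $p_i = \ell_i(\Phibf\Xbf)/\rank(\Phibf\Xbf)$, the largest admissible constant is $\beta = \rank(\Phibf\Xbf)/(\tilde{I}\,\mu(\Phibf\Xbf))$, the binding constraint $q_i\ge\beta p_i$ occurring at the maximal leverage score $\mu(\Phibf\Xbf)$. Substituting the bound of the previous paragraph and using $\rank(\Phibf\Xbf)\ge 1$ gives $1/\beta \le 2^P R^P\prod_{p=1}^P\ln(2I_pR(P+1)/\delta)$, whence the threshold of Lemma~\ref{lem:subsampling} (with failure probability $\delta/(P+1)$ and column count $R$) satisfies
\begin{equation}
	\frac{8}{3}\frac{R\ln(2R(P+1)/\delta)}{\beta\varepsilon^2} \le \frac{8}{3}\,2^P R^{P+1}\varepsilon^{-2}\ln\Big(\frac{2R(P+1)}{\delta}\Big)\prod_{p=1}^P\ln\Big(\frac{2I_pR(P+1)}{\delta}\Big),
\end{equation}
which is precisely hypothesis \eqref{eq:J-bound-subspace-embedding}. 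The extra power of $R$, giving $R^{P+1}$ rather than $R^{P}$, arises from pairing the leading column count $R$ in Lemma~\ref{lem:subsampling} with the $R^P$ produced by the $P$-fold coherence product.

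The step I expect to require the most care is the probabilistic bookkeeping, because the value of $\beta$ is certified by a random event. I would handle this by conditioning on the realization of $\Phibf$: the sampling matrix $\Sbf_\qbf$ is independent of $\Phibf$, and on $E$ (which occurs with probability at least $1 - P\delta/(P+1)$ by a union bound) the deterministic $J$ from \eqref{eq:J-bound-subspace-embedding} exceeds the threshold above, so Lemma~\ref{lem:subsampling} applies conditionally and the singular-value condition fails with conditional probability at most $\delta/(P+1)$; integrating over $E$ gives $\Pb(\text{failure}\cap E)\le\delta/(P+1)$. A final union bound, $\Pb(\text{failure}) \le \Pb(E^c) + \Pb(\text{failure}\cap E) \le P\delta/(P+1) + \delta/(P+1) = \delta$, closes the argument. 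The only genuine subtlety is that this conditioning is legitimate, namely that the \emph{fixed} uniform distribution really is a valid $\beta$-leverage-score sampling for every $\Phibf\in E$; this holds because $\beta$ was defined through the exact leverage scores of $\Phibf\Xbf$, which uniform sampling dominates by the required factor on $E$.
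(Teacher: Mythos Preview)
Your proposal is correct and follows essentially the same route as the paper: rewrite $\Phibf\Xbf$ as the Khatri--Rao product $\kr_{p=1}^P(\Hbf^{(p)}\Dbf^{(p)}\Xbf^{(p)})$, control its coherence via Lemmas~\ref{lem:mixing} and~\ref{lem:Cheng}, deduce that uniform sampling is valid $\beta$-leverage-score sampling, and invoke Lemma~\ref{lem:subsampling} with a final union bound over the $P+1$ failure events. Your reduction from the subspace-embedding inequality to the singular-value bounds on $\Sbf_\qbf\col(\Phibf\Xbf)$ is slightly more direct than the paper's (which passes through the SVD $\Xbf=\Ubf\Sigmabf\Vbf^\top$ and substitutes $\wbf=\Sigmabf\Vbf^\top\zbf$), and your explicit conditioning on $\Phibf$ to justify applying Lemma~\ref{lem:subsampling} with a random $\beta$ is a point the paper leaves implicit; but these are presentational rather than substantive differences.
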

\begin{proof}
	If all columns of $\Xbf$ are the zero vector, the claim is trivially true. So we now assume that at least one column of $\Xbf$ is nonzero.
	Note that $\Xbf = \kr_{p=1}^P \Xbf^{(p)}$, where each $\Xbf^{(p)} \defeq [\xbf_1^{(p)}, \; \xbf_2^{(p)}, \ldots, \; \xbf_R^{(p)}]$. By Lemma~\ref{lem:mixing}, for a fixed $p \in [P]$, the following holds with probability at least $1-\eta$:
	\begin{equation}
		\mu(\Hbf^{(p)} \Dbf^{(p)} \Xbf^{(p)}) \leq \frac{2 R \ln(2 I_p R/\eta)}{I_p}.
	\end{equation}
	Hence, taking a union bound, the following holds with probability at least $1 - P\eta$:
	\begin{equation}
		(\forall p \in [P]) \;\;\;\; \mu(\Hbf^{(p)} \Dbf^{(p)} \Xbf^{(p)}) \leq \frac{2 R \ln(2 I_p R/\eta)}{I_p}.
	\end{equation}
	Now applying Lemma~\ref{lem:Cheng}, we have that the following holds with probability at least $1-P\eta$:
	\begin{equation} \label{eq:coherence-bound}
	\mu(\Phibf \Xbf) = \mu\Big(\kr_{p=1}^P \Hbf^{(p)} \Dbf^{(p)} \Xbf^{(p)}\Big) \leq \prod_{p=1}^P \mu(\Hbf^{(p)} \Dbf^{(p)} \Xbf^{(p)}) \leq \frac{1}{\tilde{I}} \prod_{p=1}^P 2R \ln(2I_p R/\eta).
	\end{equation}
	For $i \in [\tilde{I}]$, let $p_i = \ell_i(\Phibf \Xbf) / \rank(\Phibf \Xbf)$. 
	Since $\Phibf$ is a Kronecker product of orthogonal matrices, $\Phibf$ is also orthogonal \citep{loan2000}, and since $\Xbf$ is nonzero, it follows that $\rank(\Phibf \Xbf) \geq 1$, so $p_i$ is well defined.
	Instead of sampling according to the unknown distribution $[p_1, \; p_2, \ldots, \; p_{\tilde{I}}]$, we sample according to the uniform distribution $\qbf = [q_1, \; q_2, \ldots, \; q_{\tilde{I}}]$. To get guarantees, we want to apply Lemma~\ref{lem:subsampling}.
	To do this, we first need to find some $\beta \in (0, 1]$ such that
	\begin{equation}
		(\forall i \in [\tilde{I}]) \;\;\;\; q_i = \frac{1}{\tilde{I}} \geq \beta p_i.
	\end{equation}
	From \eqref{eq:coherence-bound}, the following holds with probability at least $1 - P \eta$:
	\begin{equation}
		p_i = \frac{\ell_i(\Phibf \Xbf)}{\rank(\Phibf \Xbf)} \leq \mu(\Phibf \Xbf) \leq \frac{1}{\tilde{I}} \prod_{p=1}^P 2R \ln(2I_p R/\eta).
	\end{equation}
	Hence, choosing $\beta$ such that
	\begin{equation}
	\beta^{-1} = \prod_{p=1}^P 2 R \ln (2 I_p R / \eta)
	\end{equation}
	ensures that $q_i = 1/\tilde{I} \geq \beta p_i$ for all $i \in [\tilde{I}]$ with probability at least $1-P \eta$. Let $\alpha \defeq \rank(\Phibf \Xbf) = \rank(\Xbf)$, and let $\Ubf \Sigmabf \Vbf^\top = \Xbf$ be the SVD of $\Xbf$ with $\Ubf \in \Rb^{\tilde{I} \times \alpha}$, $\Sigmabf \in \Rb^{\alpha \times \alpha}$ and $\Vbf \in \Rb^{R \times \alpha}$. Note that the columns of $\Phibf \Ubf$ form an orthonormal basis for $\range(\Phibf \Xbf)$. Hence, we can choose $\col(\Phibf \Xbf) = \Phibf \Ubf$. 
	Using Lemma~\ref{lem:subsampling}, with $\Abf = \Phibf \Xbf \in \Rb^{\tilde{I} \times R}$, it follows that if
	\begin{equation} \label{eq:J-bound-subspace-embedding-0}
	J > \frac{8}{3} \cdot 2^P R^{P+1} \varepsilon^{-2} \ln(2 R / \eta) \prod_{p=1}^P \ln(2 I_p R / \eta),
	\end{equation}
	then the following holds with probability at least $1-(P+1)\eta$:
	\begin{equation}
	(\forall i \in [\alpha]) \;\;\;\;\ 1-\varepsilon \leq \sigma_i^2(\Sbf_\qbf \Phibf \Ubf) \leq 1+\varepsilon.
	\end{equation}
	By the minimax characterization of singular values (see e.g.\ Theorem~8.6.1 in \citep{golub2013}), it follows that
	\begin{equation}
	(\forall \wbf \in \Rb^\alpha) \;\;\;\; (1-\varepsilon) \|\wbf\|_2^2 \leq \|\Sbf_\qbf \Phibf \Ubf \wbf \|_2^2 \leq (1+\varepsilon) \|\wbf\|_2^2
	\end{equation}
	holds with probability at least $1-(P+1)\eta$.
	In particular, for any $\zbf \in \Rb^{R}$, this is true for $\wbf = \Sigmabf \Vbf^\top \zbf \in \Rb^\alpha$. Consequently, 
	\begin{equation}
	(\forall \zbf \in \Rb^R) \;\;\;\; (1-\varepsilon) \|\Sigmabf \Vbf^\top \zbf\|_2^2 \leq \|\Sbf_\qbf \Phibf \Ubf \Sigmabf \Vbf^\top \zbf \|_2^2 \leq (1+\varepsilon) \|\Sigmabf \Vbf^\top \zbf\|_2^2,
	\end{equation} 
	or equivalently, 
	\begin{equation}
	(\forall \zbf \in \Rb^R) \;\;\;\; (1-\varepsilon) \|\Xbf \zbf\|_2^2 \leq \|\Sbf_\qbf \Phibf \Xbf \zbf \|_2^2 \leq (1+\varepsilon) \|\Xbf \zbf\|_2^2,
	\end{equation} 
	holds with probability at least $1-(P+1)\eta = 1-\delta$, where $\delta \defeq (P+1)\eta$. Replacing $\eta = \delta/(P+1)$ in \eqref{eq:J-bound-subspace-embedding-0} gives \eqref{eq:J-bound-subspace-embedding}.	
\end{proof}

The following theorem is our main result. It shows that the KFJLT is a $\JLT(\varepsilon, \delta, N)$ on $\Yc$ when the embedding dimension $J$ is sufficiently large. 
\begin{theorem} \label{thm:KFJLT}
	Let $\Xc \subseteq \Yc$ consist of $N$ distinct vectors with Kronecker structure. 
	Let $\Phibf$ be defined as in Theorem~\ref{thm:subspace-embedding}. 
	Let $\Sbf_\qbf \in \Rb^{J \times \tilde{I}}$ be a sampling matrix as in Definition~\ref{def:lev-score-sampling} with $\qbf$ equal to the uniform distribution, and assume $\varepsilon \in (0,1)$. If
	\begin{equation} \label{eq:J-bound}
	J > \frac{16}{3} \cdot 4^P \varepsilon^{-2} \ln \Big( \frac{4 N^2 (P+1)}{\delta} \Big) \prod_{p=1}^P \ln \Big(\frac{4 I_p N^2 (P+1)}{\delta} \Big),
	\end{equation} 
	then the following holds with probability at least $1-\delta$:
	\begin{equation} \label{eq:KFJLT-bounds}
	(\forall \xbf, \ybf \in \Xc) \;\;\;\; (1-\varepsilon) \| \xbf - \ybf \|_2^2 \leq \|\Sbf_\qbf \Phibf \xbf - \Sbf_\qbf \Phibf \ybf \|_2^2 \leq (1+\varepsilon) \| \xbf - \ybf \|_2^2.
	\end{equation}
\end{theorem}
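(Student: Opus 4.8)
The plan is to reduce the pairwise-distance statement \eqref{eq:KFJLT-bounds} to the oblivious subspace embedding guarantee of Theorem~\ref{thm:subspace-embedding}, applied to a family of rank-two matrices, one for each pair of points in $\Xc$. The crucial observation is that although the difference $\xbf - \ybf$ of two Kronecker vectors is generally \emph{not} itself a Kronecker vector, it nonetheless lies in the column space of a matrix whose columns \emph{do} have Kronecker structure. Indeed, writing $\xbf = \kron_{p=1}^P \xbf^{(p)}$ and $\ybf = \kron_{p=1}^P \ybf^{(p)}$, the Khatri--Rao product
\begin{equation*}
	\Mbf_{\xbf,\ybf} \defeq \kr_{p=1}^P \big[ \xbf^{(p)}, \; \ybf^{(p)} \big] = [\xbf, \; \ybf] \in \Rb^{\tilde{I} \times 2}
\end{equation*}
has exactly two columns, namely $\xbf$ and $\ybf$, each of Kronecker form. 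Hence $\Mbf_{\xbf,\ybf}$ is precisely the kind of matrix handled by Theorem~\ref{thm:subspace-embedding} with $R = 2$, and the difference $\xbf - \ybf = \Mbf_{\xbf,\ybf}\,[1, \; -1]^\top$ lies in its column space.

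First I would apply Theorem~\ref{thm:subspace-embedding} to each such $\Mbf_{\xbf,\ybf}$ with $R = 2$ and a per-pair failure probability $\eta' \defeq \delta / \binom{N}{2}$. Since $\Sbf_\qbf \Phibf$ is an \emph{oblivious} subspace embedding, the same random map is reused for every pair; for a fixed pair, the theorem gives that, with probability at least $1-\eta'$,
\begin{equation*}
	(\forall \zbf \in \Rb^2) \;\;\;\; (1-\varepsilon)\|\Mbf_{\xbf,\ybf}\zbf\|_2^2 \leq \|\Sbf_\qbf \Phibf \Mbf_{\xbf,\ybf}\zbf\|_2^2 \leq (1+\varepsilon)\|\Mbf_{\xbf,\ybf}\zbf\|_2^2,
\end{equation*}
provided $J$ exceeds the bound \eqref{eq:J-bound-subspace-embedding} evaluated at $R = 2$ and with $\delta$ there replaced by $\eta'$. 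Specializing to $\zbf = [1, \; -1]^\top$ then yields exactly the two-sided bound on $\|\Sbf_\qbf \Phibf(\xbf - \ybf)\|_2^2$ appearing in \eqref{eq:KFJLT-bounds} for that pair.

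Next I would take a union bound over all $\binom{N}{2}$ distinct pairs in $\Xc$, so that \eqref{eq:KFJLT-bounds} holds simultaneously for every pair with probability at least $1 - \binom{N}{2}\eta' = 1 - \delta$. It then remains to verify that the hypothesis \eqref{eq:J-bound} implies the required lower bound on $J$ for each of these applications. Substituting $R = 2$ into \eqref{eq:J-bound-subspace-embedding} collapses the prefactor $\tfrac{8}{3}\cdot 2^P R^{P+1}$ into $\tfrac{16}{3}\cdot 4^P$, while replacing $\delta$ by $\eta' = \delta/\binom{N}{2}$ turns the logarithmic arguments into $\ln\!\big(4(P+1)\binom{N}{2}/\delta\big)$ and $\ln\!\big(4 I_p (P+1)\binom{N}{2}/\delta\big)$. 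Using the crude bound $\binom{N}{2} < N^2$ to enlarge each logarithm recovers precisely the terms $\ln(4N^2(P+1)/\delta)$ and $\ln(4 I_p N^2(P+1)/\delta)$ of \eqref{eq:J-bound}; since each logarithm in \eqref{eq:J-bound} dominates the corresponding one in the per-pair requirement, \eqref{eq:J-bound} indeed suffices.

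I expect the main conceptual step to be the first one: recognizing that the pairwise differences, which individually break the Kronecker structure, can be packaged into rank-two matrices with Kronecker-structured columns via the Khatri--Rao product, thereby making the subspace embedding result directly applicable. Once this reduction is in place, the remainder is a routine union bound together with the constant-chasing needed to match \eqref{eq:J-bound}; the only mild subtlety there is the choice of how loosely to bound $\binom{N}{2}$ so that the clean stated form is obtained.
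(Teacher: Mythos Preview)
Your proposal is correct and follows essentially the same approach as the paper's proof: both reduce to Theorem~\ref{thm:subspace-embedding} with $R=2$ applied to $[\xbf,\,\ybf]$, specialize to $\zbf=[1,\,-1]^\top$, and then union bound over pairs (the paper counts $N^2-N$ ordered pairs and bounds by $N^2$, while you count $\binom{N}{2}$ unordered pairs and likewise bound by $N^2$). The only cosmetic addition is your explicit remark that $[\xbf,\,\ybf]$ is a Khatri--Rao product, which the paper leaves implicit.
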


\begin{proof}
	Fix $\xbf, \ybf \in \Xc$ and set $\Xbf \defeq [\xbf, \; \ybf]$. From Theorem~\ref{thm:subspace-embedding}, we know that if
	\begin{equation} \label{eq:J-bound-2}
		J > \frac{8}{3} \cdot 2^P 2^{P+1} \varepsilon^{-2} \ln \Big( \frac{4 (P+1)}{\eta} \Big) \prod_{p=1}^P \ln \Big( \frac{4 I_p (P+1)}{\eta} \Big),
	\end{equation} 
	then the following holds with probability at least $1 - \eta$:
	\begin{equation}
		(\forall \zbf \in \Rb^2) \;\;\;\; (1-\varepsilon) \|\Xbf \zbf\|_2^2 \leq \| \Sbf_\qbf \Phibf \Xbf \zbf \|_2^2 \leq (1+\varepsilon) \|\Xbf \zbf\|_2^2.
	\end{equation}
	In particular, setting $\zbf = [1, \; -1]^\top$, we have that, with probability at least $1-\eta$,
	\begin{equation}
		(1-\varepsilon) \| \xbf - \ybf \|_2^2 \leq \| \Sbf_\qbf \Phibf \xbf - \Sbf_\qbf \Phibf \ybf \|_2^2 \leq (1+\varepsilon) \|\xbf - \ybf\|_2^2.
	\end{equation}
	Taking a union bound over all distinct $N^2 - N$ pairs $\xbf, \ybf \in \Xc$, we have that \eqref{eq:KFJLT-bounds} holds with probability at least $1 - N^2 \eta = 1 - \delta$, where $\delta \defeq N^2 \eta$. Replacing $\eta = \delta / N^2$ in \eqref{eq:J-bound-2} gives \eqref{eq:J-bound}.
\end{proof}

Assuming $N > \max\{P, 4\}$, the bound in \eqref{eq:J-bound} can be simplified to
\begin{equation} \label{eq:J-bound-simplified}
	J > C_1 \varepsilon^{-2} C_2^{P} \log\Big(\frac{N}{\delta}\Big) \prod_{p=1}^P \log\Big( \frac{I_p N}{\delta} \Big),
\end{equation}
where $C_1$ and $C_2$ are absolute constants.
For comparison, and expressed in the same notation as in this paper, the bound on $J$ in Theorem~2.1 in \cite{jin2019} needed to guarantee that the KFJLT is a $\JLT(\varepsilon, \delta + 2^{-\Omega(\log \tilde{I})}, N)$ on $\Rb^{\tilde{I}}$ is of the form
\begin{equation} \label{eq:J-bound-jin}
J > C \varepsilon^{-2} \log^{2P-1}\Big( \frac{P N}{\delta} \Big) \log^4\bigg( \varepsilon^{-1} \log^P\Big(\frac{P N}{\delta}\Big) \bigg) \log \Big( \prod_{p=1}^P I_p \Big),
\end{equation}
where $C$ is an absolute constant. Whether \eqref{eq:J-bound-simplified} or \eqref{eq:J-bound-jin} yield a better bound depends on the various parameters.
For example, the bound in \eqref{eq:J-bound-jin} has a nicer dependence on the dimension sizes $I_1, I_2, \ldots,I_P$ than the bound in \eqref{eq:J-bound-simplified}. Indeed, the term 
\begin{equation}
	\log \Big( \prod_{p=1}^P I_p \Big) = \sum_{p=1}^P \log(I_p)
\end{equation}
in \eqref{eq:J-bound-jin} is a sum of logs of $I_1, I_2, \ldots,I_P$, whereas the term
\begin{equation}
	\prod_{p=1}^P \log\Big( \frac{I_p N}{\delta} \Big)
\end{equation}
in \eqref{eq:J-bound-simplified} is a product of logs of $I_1 N/\delta, I_2 N/\delta, \ldots, I_P N/\delta$. 
On the other hand, the bound in \eqref{eq:J-bound-simplified} has a nicer dependence on $\varepsilon$ than \eqref{eq:J-bound-jin} does. 
Indeed, \eqref{eq:J-bound-simplified} contains the term $(\varepsilon^{-2})$ whereas \eqref{eq:J-bound-jin} contains the term $(\varepsilon^{-2} \log^4(\varepsilon^{-1}))$. 
These two bounds can therefore be combined to yield a better bound on the size of $J$ required to ensure that the KFJLT is a JLT on $\Yc$. 

As noted by \citet{iwen2019}, one of the intermediate embedding dimension results in Theorem~1 of their paper can be translated to a subspace embedding result of the same flavor as what we present in Theorem~\ref{thm:subspace-embedding}. Their result has a better dependence on $R$: It is proportional to $R^2$ while our bound in \eqref{eq:J-bound-subspace-embedding} is proportional to $R^{P+1}$. Moreover, their results hold for a large family of sketch matrices, whereas our result is limited to the KFJLT sketch. On the other hand, their bound has worse dependence on $\varepsilon$: It is proportional to $\varepsilon^{-2P}$ while our bound is proportional to $\varepsilon^{-2}$. Their guarantees also require a coherence assumption. In Theorems~2 and 8 of their paper, they provide guarantees for a variant of KFJLT, with an intermediate embedding result that corresponds to a subspace embedding variant of Theorem~2.1 in \citep{jin2019}.

\begin{remark} \label{rem:ls-guarantees}
	Our coherence and sampling argument can also be used to provide guarantees for sketched least squares regression. 
	Let $\Xbf$, $\Phibf$ and $\Sbf_\qbf$ be defined as in Theorem~\ref{thm:subspace-embedding} and suppose $\Xbf$ is full rank, let $\ybf \in \Rb^{\tilde{I}}$ be an arbitrary vector with no assumptions on its structure, and let $\varepsilon \in (0,1)$.
	One can show that if $J$ is large enough, then $\|\Xbf \hat{\zbf} - \ybf\|_2 \leq (1+\varepsilon) \OPT$, where $\hat{\zbf} \defeq \argmin_\zbf\|\Sbf_\qbf \Phibf (\Xbf \zbf - \ybf)\|_2$ and $\OPT \defeq \min_\zbf \|\Xbf \zbf - \ybf\|_2$. This can be done by following the same arguments as in the proof of Theorem~2 in Section~4 of \citep{drineas2011}. Let $\Ubf$ be the top $R$ singular values of $\Xbf$ and let $\ybf_\perp$ be the portion of $\ybf$ which is perpendicular to $\range(\Ubf)$. The proof boils down to showing that $\sigma^2_R(\Sbf_\qbf \Phibf \Ubf) \geq 1/\sqrt{2}$ and $\|\Ubf^\top (\Sbf_\qbf \Phibf)^\top (\Sbf_\qbf \Phibf) \ybf_\perp\|_2^2 \leq \varepsilon \cdot \OPT^2/2$ with high probability for sufficiently large $J$. The first statement follows directly from Theorem~\ref{thm:subspace-embedding}, and the second statement follows from Monte Carlo sampling results in \citep{drineas2006b} which require no information about $\ybf_\perp$. We refer to \citep{drineas2011} for further details.
\end{remark}

\section{Numerical Experiments} \label{eq:experiments}

We present results from experiments on both synthetic and real-world data. These experiments were implemented in Matlab.\footnote{Our code is available online at \url{https://github.com/OsmanMalik/kronecker-sketching}.}

\subsection{Experiment 1: Synthetic Data}

In this section, we present the results from an experiment which compares five different sketches when applied to random Kronecker vectors with three different random distributions. The five methods we compare are the following.
\begin{itemize}
	\item \textbf{Gaussian} sketch uses an unstructured $J \times \tilde{I}$ matrix with i.i.d.\ standard normal entries which are scaled by $1/\sqrt{J}$. This approach is not scalable, but interesting to use as a baseline in this experiment.
	\item \textbf{KFJLT} is the sketch discussed in this paper, and which is defined in Definition~\ref{def:KFJLT}, with the only difference that the uniform sampling of rows is done without replacement.
	\item \textbf{TRP} is the method proposed in \citep{sun2018}. As sub-matrices, we use matrices with i.i.d.\ standard normal entries of size $I_p \times J$ and rescale appropriately.
	\item \textbf{TensorSketch} is the method developed in \citep{pagh2013, pham2013, avron2014, diao2018}.
	\item \textbf{Sampling} is the method proposed by \citep{cheng2016}. It computes an estimate of the leverage scores for each row of the matrix to be sampled and uses these to compute a distribution which is used for sampling.
\end{itemize}
All of these methods, except the first, are specifically designed for sketching vectors with Kronecker structure. As input, we use random Kronecker vectors $\xbf = \kron_{p=1}^3 \xbf^{(p)}$, where each $\xbf^{(p)} \in \Rb^{16}$ has one of the following distributions.
\begin{itemize}
	\item Each $\xbf^{(p)}$ has i.i.d.\ standard normal entries. 
	\item Each $\xbf^{(p)}$ is sparse with three nonzero elements, which are independent and normally distributed with mean zero and standard deviation 100. The positions of the three nonzero elements are drawn uniformly at random without replacement.
	\item Each $\xbf^{(p)}$ contains a single nonzero entry, which is chosen uniformly at random. This nonzero entry is equal to 100.
\end{itemize}
Sparse Kronecker vectors are interesting in many data science applications, and arise in decomposition of sparse tensors, for example.

In the experiment, we draw two random vectors $\xbf, \ybf \in \Rb^{4096}$, each of which is a Kronecker product of three smaller vectors of length $16$, drawn according to one of the three distributions above. For each of the five sketches $f$ and for some embedding dimension $J$, we then compute how well they preserve the distance between $\xbf$ and $\ybf$ by computing the quantity
\begin{equation} \label{eq:distortion-metric}
	\Big| \frac{\|f(\xbf)-f(\ybf)\|_2}{\|\xbf - \ybf\|_2} - 1 \Big|.
\end{equation}
For each of the three distributions and each embedding dimension
\begin{equation} \label{eq:J-range}
	J \in \{100, 200, \ldots, 1000\}
\end{equation}
we repeat this 1000 times and compute the mean, standard deviation and maximum of \eqref{eq:distortion-metric} over those 1000 trials. 
For $J$ as in \eqref{eq:J-range}, applying one of the sketches to $\xbf$ and $\ybf$ reduces the number of entries in those vectors by between 76\% (for $J=1000$) and 98\% (for $J=100$).
Figures~\ref{fig:experiment1-normal}--\ref{fig:experiment1-large-single} present the results for each of the three distributions.

\begin{figure}[ht!]
	\centering  
	\includegraphics[width=1\textwidth]{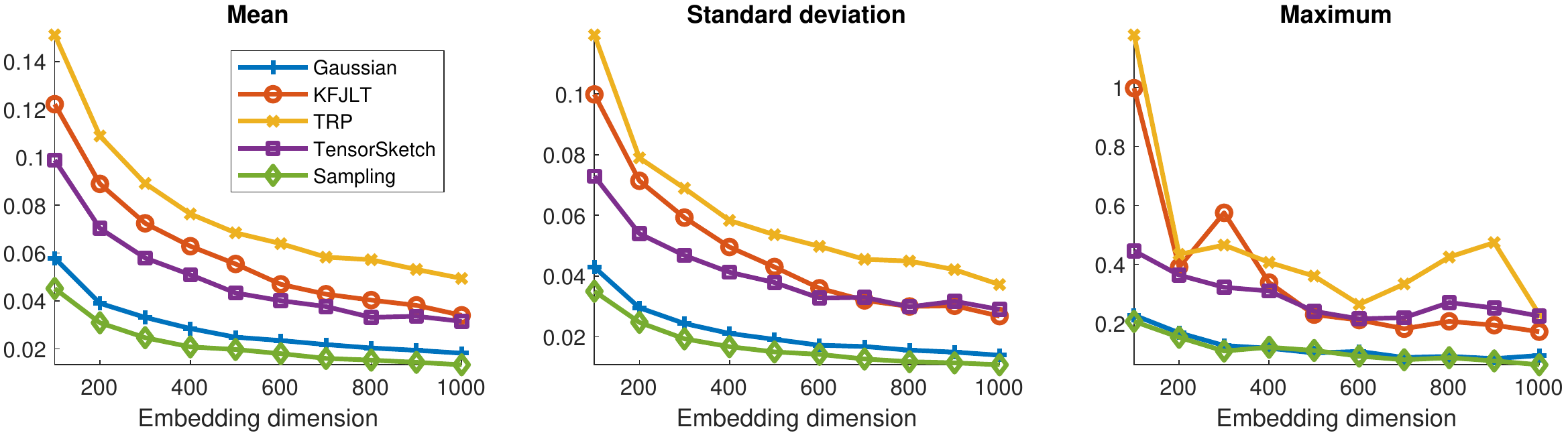}
	\caption{Mean, standard deviation, and maximum of the quantity in \eqref{eq:distortion-metric} over 1000 trials when the test vectors are Kronecker products of vectors with i.i.d.\ standard normal entries.}
	\label{fig:experiment1-normal}
\end{figure}

\begin{figure}[ht!]
	\centering  
	\includegraphics[width=1\textwidth]{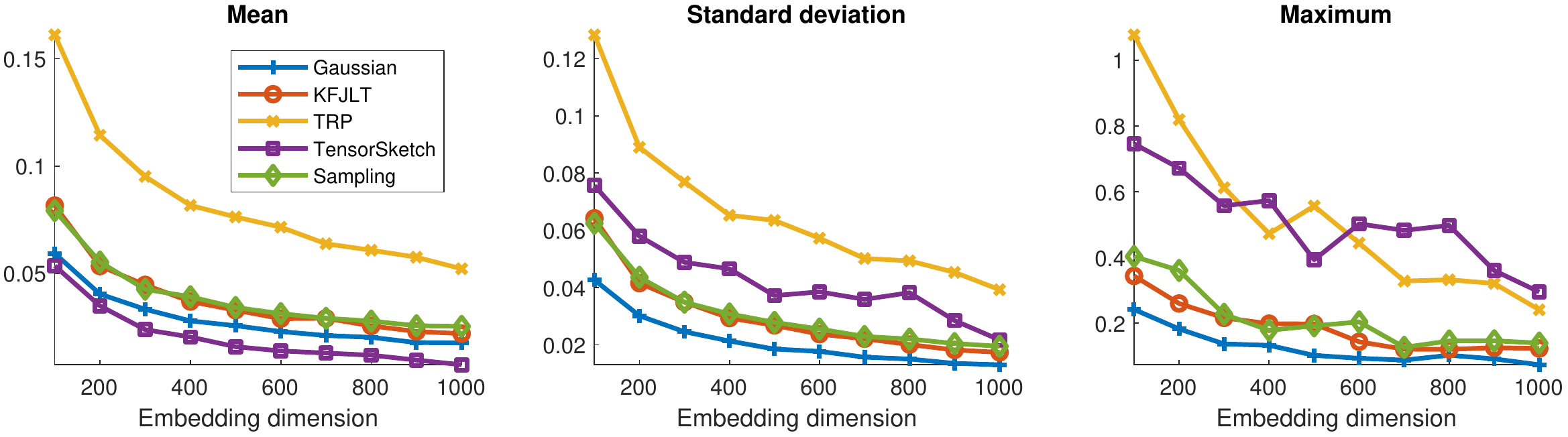}
	\caption{Mean, standard deviation, and maximum of the quantity in \eqref{eq:distortion-metric} over 1000 trials when the test vectors are Kronecker products of vectors with three nonzero elements which are independent and normally distributed with mean zero and standard deviation 100.}
	\label{fig:experiment1-sparse}
\end{figure}

\begin{figure}[ht!]
	\centering  
	\includegraphics[width=1\textwidth]{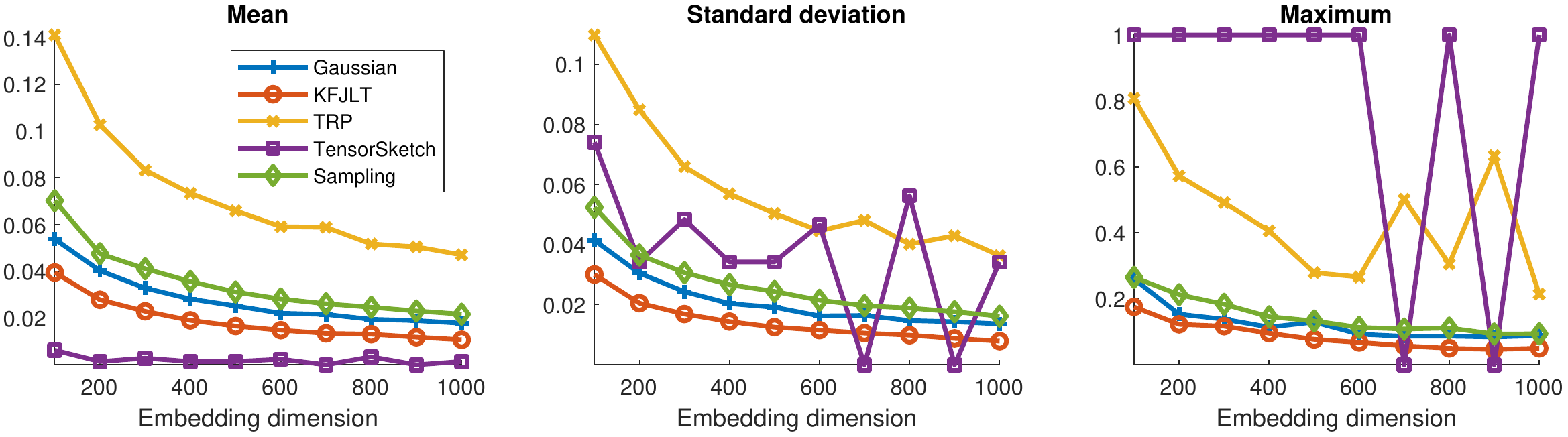}
	\caption{Mean, standard deviation, and maximum of the quantity in \eqref{eq:distortion-metric} over 1000 trials when the test vectors are Kronecker products of vectors containing a single nonzero entry equal to 100.}
	\label{fig:experiment1-large-single}
\end{figure}

No one method produces the best results for all three distributions. The leverage score sampling approach does very well on dense vectors, even outperforming the Gaussian sketch, but does less well on sparser inputs. Although TensorSketch has an impressive mean performance on the two sparser inputs, it sometimes produces high distortion rates on those inputs. On the sparser inputs, the KFJLT seems to strike the best balance between mean and worst case performance. TRP does poorly for all three distribution types.

\subsection{Experiment 2: MNIST Handwritten Digits}

In this experiment, we consider a subset of the MNIST Handwritten Digits dataset \citep{lecun1998}, which is a standard benchmark dataset in machine learning.\footnote{We downloaded the MNIST dataset using the scripts provided at \url{https://github.com/sunsided/mnist-matlab}.} 
The dataset consists of images of handwritten digits between 0 and 9. 
Each image is in gray scale and of size 28 by 28 pixels. 
To make the image width and height powers of two, we pad the images with zeros so that their size is 32 by 32 pixels.
We arrange 100 images depicting fours into a tensor $\Xe \in \Rb^{32 \times 32 \times 100}$ and 100 images depicting nines into another tensor $\Ye \in \Rb^{32 \times 32 \times 100}$. 
Handwritten fours and nines can look quite similar and can be difficult to distinguish, which is why we choose this particular pair of digits.
We then compute a rank-10 approximate CP decomposition of each tensor using \verb|cp_als| in the Tensor Toolbox for Matlab \citep{bader2006,bader2015}. These take the form
\begin{equation}
\begin{aligned}
	\hat{\Xe} &\defeq \sum_{r=1}^{10} \Abf^{(1)}_{:r} \circ \Abf^{(2)}_{:r} \circ \Abf^{(3)}_{:r} \approx \Xe, \\
	\hat{\Ye} &\defeq \sum_{r=1}^{10} \Bbf^{(1)}_{:r} \circ \Bbf^{(2)}_{:r} \circ \Bbf^{(3)}_{:r} \approx \Ye, \\
\end{aligned}
\end{equation}
where $\circ$ denotes outer product, and each $\Abf^{(1)}, \Bbf^{(1)}, \Abf^{(2)}, \Bbf^{(2)} \in \Rb^{32 \times 10}$ and $\Abf^{(3)}, \Bbf^{(3)} \in \Rb^{100 \times 10}$ are called factor matrices; see \citep{kolda2009} for further details on tensor decomposition.
Notice that the factor matrices require much less storage than the original tensors. Figure~\ref{fig:4-9} shows an example of a four and a nine in the MNIST dataset, and their corresponding approximations in the CP tensors.

\begin{figure}[ht!]
	\centering  
	\includegraphics[width=1\textwidth]{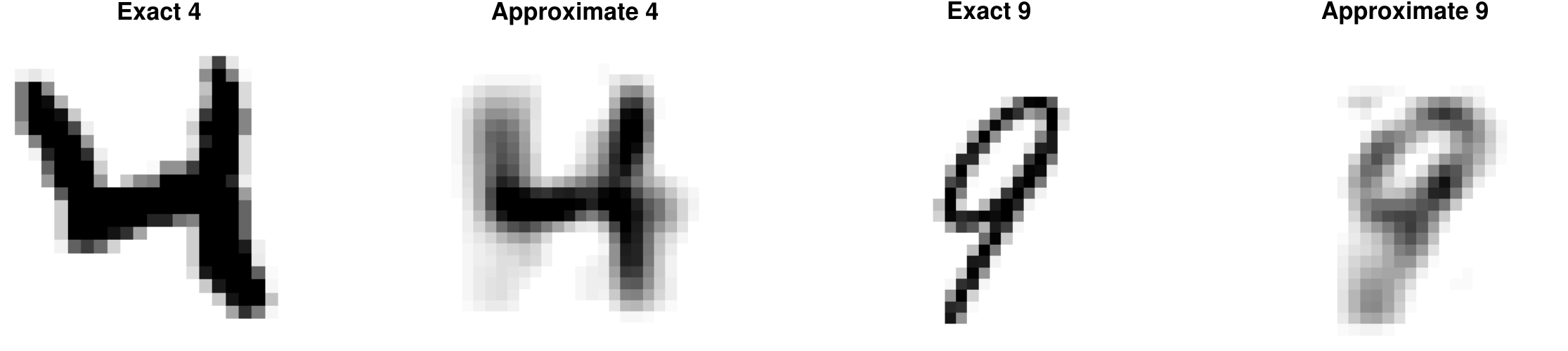}
	\caption{Example of a four and a nine with their corresponding approximations.}
	\label{fig:4-9}
\end{figure}

In this experiment, we apply the KFJLT, TRP, TensorSketch and sampling sketches to $\hat{\Xe}$ and $\hat{\Ye}$ to see how well they preserve the distance between these tensors. 
The four sketches can be applied efficiently to $\hat{\Xe}$ and $\hat{\Ye}$ in their decomposed form.
To see this, define $\Xbf \defeq \Abf^{(1)} \odot \Abf^{(2)} \odot \Abf^{(3)} \in \Rb^{102400 \times 10}$ and $\Ybf \defeq \Bbf^{(1)} \odot \Bbf^{(2)} \odot \Bbf^{(3)} \in \Rb^{102400 \times 10}$, and let $\ubf \in \Rb^{20}$ denote a column vector with elements $\ubf_i = 1$ if $1 \leq i \leq 10$ and $\ubf_i = -1$ if $11 \leq i \leq 20$.
Then the following relation holds:
\begin{equation}
	\|\hat{\Xe} - \hat{\Ye}\|_\F = \| [ \Xbf, \, \Ybf ] \, \ubf \|_2,
\end{equation}
where $\|\cdot\|_\F$ denotes the tensor Frobenius norm. 
Since the columns of $[ \Xbf, \, \Ybf ]$ are Kronecker products, each of the four sketches under consideration can be applied efficiently to this matrix. 
The Gaussian sketch requires too much memory and is therefore not considered.  

For any sketch $\Mbf$ with the property 
\begin{equation} \label{eq:M-tensor-sketch}
	\| [ \Xbf, \, \Ybf ] \, \ubf \|_2 \approx \| \Mbf \, [ \Xbf, \, \Ybf ] \, \ubf \|_2,
\end{equation}
we may use $\| \Mbf \, [ \Xbf, \, \Ybf ] \, \ubf \|_2$ as an estimate for $\|\hat{\Xe} - \hat{\Ye}\|_\F$. 
In the case of KFJLT, a guarantee of the form \eqref{eq:M-tensor-sketch} follows from Theorem~\ref{thm:subspace-embedding} when $J$ is large enough.
For each of the four sketches and some embedding dimension $J$, we compute the quantity
\begin{equation} \label{eq:distortion-metric-2}
\Big| \frac{\| \Mbf \, [ \Xbf, \, \Ybf ] \, \ubf \|_2}{\|\hat{\Xe} - \hat{\Ye}\|_\F} - 1 \Big|
\end{equation}
as a measure of performance. 
For each embedding dimension
\begin{equation} \label{eq:J-range-2}
	J \in \{ 100, 200, \cdots, 5000 \}
\end{equation}
we repeat this 1000 times and compute the mean, standard deviation and maximum of \eqref{eq:distortion-metric-2} over those 1000 trials. 
The pair $(\hat{\Xe}, \hat{\Ye})$ remains the same in all trials.
For $J$ as in \eqref{eq:J-range-2}, applying one of the sketches to $[ \Xbf, \, \Ybf ]$ reduces the number of rows by between 95\% (for $J = 5000$) and 99.9\% (for $J = 100$).
Figure~\ref{fig:experiment2} presents the results of the experiment. 
\begin{figure}[ht!]
	\centering  
	\includegraphics[width=1\textwidth]{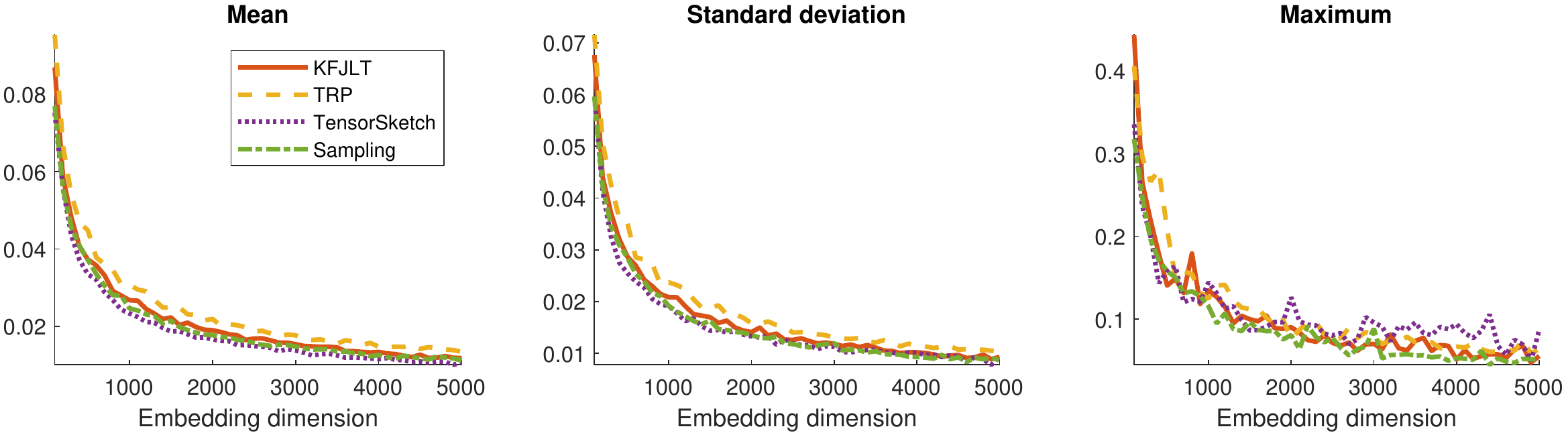}
	\caption{Mean, standard deviation, and maximum of the quantity in \eqref{eq:distortion-metric-2} over 1000 trials.}
	\label{fig:experiment2}
\end{figure}
All methods have similar performance. 
The fact that the factor matrices are mostly dense (some rows are zero due to the padding) may explain why the occasional large errors for TensorSketch observed in Figure~\ref{fig:experiment1-large-single} are avoided. 
Our results here indicate that the different sketches may have more similar performance on vectors like $[ \Xbf, \, \Ybf ] \, \ubf$ which have less structure than the Kronecker vectors in the synthetic experiment.
\citet{jin2019} made the related observation that KFJLT does a better job of embedding unstructured vectors than Kronecker structured ones; see Section~5.2 in their paper for further details.

\section{Conclusion} \label{sec:conclusion}

We have presented a coherence and sampling argument for showing that the KFJLT is a Johnson--Lindenstrauss transform on vectors with Kronecker structure. 
Since our bound on the embedding dimension is different from the one in the recent paper by \citet{jin2019}, it can be combined with the bound from that paper to yield a better bound overall. 
As a stepping stone to proving our result, we also showed that the KFJLT is a subspace embedding for matrices whose columns are Kronecker products.

We provided results from numerical experiments which compare five different sketches, four of which are designed to be particularly efficient for sketching of Kronecker structured vectors. 
The first experiment was done on Kronecker vectors with three different random distributions. 
The second experiment was done on two CP tensors, each approximating a tensor containing digits from the MNIST dataset. 
In the first experiment, there was a clear difference in performance between different sketches, although no single method outperformed all others for all three vector distributions. 
In the second experiment, all methods performed similarly except the unstructured Gaussian sketch which was not included in the experiment due to its high memory usage. 
We believe that there is a need for a more comprehensive comparison of sketches for structured data to help practitioners choose the best sketch for their particular needs.

\section*{Acknowledgments}

We would like to thank Tammy Kolda, Ruhui Jin and Rachel Ward for providing feedback on an early version of this work. We also thank the anonymous reviewers for their comments which helped improve the paper.

This material is based upon work supported by the National Science Foundation under Grant No.\ ECCS-1810314.

\bibliographystyle{plainnat}
\bibliography{library}

\end{document}